\documentclass{amsart}
\usepackage{setspace}
\usepackage{a4}
\usepackage{amsthm}
\usepackage{latexsym}
\usepackage{amsfonts}
\usepackage{graphicx}
\usepackage{textcomp}
\usepackage{cite}
\usepackage{enumerate}
\usepackage{amssymb}
\usepackage{hyperref}
\usepackage{amsmath}
\usepackage{tikz}
\usepackage[mathscr]{euscript}
\usepackage{mathtools}
\newtheorem{theorem}{Theorem}[section]

\newtheorem{corollary}[theorem] {Corollary}

\title{This is the title}
\usepackage{fancyhdr}

\begin{document}
\hrule\hrule\hrule\hrule\hrule
\vspace{0.3cm}	
\begin{center}
{\bf\large{{Noncommutative Cauchy Bound and Noncommutative Montel Bound for Roots of Polynomials}}}\\
\vspace{0.3cm}
\hrule\hrule\hrule\hrule\hrule
\vspace{0.3cm}
\textbf{K. Mahesh Krishna}\\
School of Mathematics and Natural Sciences\\
Chanakya University Global Campus\\
NH-648, Haraluru Village\\
Devanahalli Taluk, 	Bengaluru  North District\\
Karnataka State, 562 110, India\\
Email: kmaheshak@gmail.com\\

Date: \today
\end{center}

\hrule\hrule
\vspace{0.5cm}
\textbf{Abstract}: In 1829, Cauchy derived an upper bound for every root of a complex polynomial using the maximum of the absolute values of the coefficients. In 1931, Montel derived an upper bound using the sum of the absolute values of the coefficients. We derive noncommutative versions of the Cauchy and Montel bounds.

\textbf{Keywords}: Cauchy bound, Montel bound, Banach algebra.\\
\textbf{Mathematics Subject Classification (2020)}: 30C15, 46H05, 46L05.\\

\hrule

\hrule
\section{Introduction}

In 1829, Cauchy derived the following very important upper bound for every root of a complex polynomial using the maximum of the absolute values of the coefficients \cite{CAUCHY}.
\begin{theorem} \cite{HIRSTMACEY, RAHMANSCHMEISSER, MARDEN} \label{CB} (\textbf{Cauchy Upper Bound}) Let 
	\begin{align*}
		p(z)=a_0+a_1z+\cdots+a_{n-1}z^{n-1}+z^n\in \mathbb{C}[z].
	\end{align*}
	If $\lambda \in \mathbb{C}$ satisfies $p(\lambda)=0$, then 
	\begin{align*}
		|\lambda|< 1+\max_{0\leq j \leq n-1}|a_j|.
	\end{align*}
\end{theorem}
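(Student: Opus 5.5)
Set $M=\max_{0\leq j\leq n-1}|a_j|$. The plan is to compare $|\lambda^n|$ with the lower-order terms and use a geometric-series estimate.

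\textbf{Easy case.} If $|\lambda|\leq 1$, then $|\lambda|\leq 1<1+M$ whenever $M>0$. If $M=0$, then $p(z)=z^n$, so $\lambda=0<1$. Either way the bound holds.

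\textbf{Main case, $|\lambda|>1$.} From $p(\lambda)=0$ we get $\lambda^n=-(a_0+a_1\lambda+\cdots+a_{n-1}\lambda^{n-1})$. The triangle inequality then gives
\begin{align*}
|\lambda|^n\leq M\sum_{j=0}^{n-1}|\lambda|^j=M\,\frac{|\lambda|^n-1}{|\lambda|-1}<M\,\frac{|\lambda|^n}{|\lambda|-1}.
\end{align*}
Here the last inequality is strict, and this is where the care is needed. If $M>0$ it holds because $|\lambda|^n-1<|\lambda|^n$. If $M=0$, the first inequality already reads $|\lambda|^n\leq 0$, which contradicts $|\lambda|>1$, so this subcase cannot occur. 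Dividing by $|\lambda|^n>0$ and multiplying by $|\lambda|-1>0$ gives $|\lambda|-1<M$, which is the claim.

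The argument is short. The only real obstacle is keeping the inequality strict, and the degenerate case $M=0$ is handled separately above for that reason.
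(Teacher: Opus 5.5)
Your proof is correct and is essentially the paper's argument. The paper cites Theorem \ref{CB} without proving it, but its proof of Theorem \ref{NCB}, specialized to $\mathcal{A}=\mathbb{C}$, is the same triangle-inequality and geometric-series estimate, phrased as a contradiction from $|\lambda|\geq 1+M$. Your separate treatment of $M=0$ is a small improvement, since the paper's steps $1+M>1$ and $M(1-|\lambda|^{-n})<M$ tacitly require $M>0$.
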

\begin{corollary} \label{CBC}\cite{HIRSTMACEY, RAHMANSCHMEISSER, MARDEN}
(\textbf{Cauchy Upper Bound}) Let 
\begin{align*}
	p(z)=a_0+a_1z+\cdots+a_{n-1}z^{n-1}+a_nz^n\in \mathbb{C}[z], \quad a_n \neq 0.
\end{align*}
If $\lambda \in \mathbb{C}$ satisfies $p(\lambda)=0$, then 
\begin{align*}
	|\lambda|< 1+\max_{0\leq j \leq n-1}\left|\frac{a_j}{a_n}\right|=1+\frac{1}{|a_n|}\max_{0\leq j \leq n-1}|a_j|.
\end{align*}	
\end{corollary}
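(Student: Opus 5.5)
The plan is to reduce Corollary~\ref{CBC} to Theorem~\ref{CB} by normalizing the leading coefficient. Since $a_n\neq 0$, I will form the monic polynomial
\begin{align*}
	q(z)=\frac{1}{a_n}p(z)=\frac{a_0}{a_n}+\frac{a_1}{a_n}z+\cdots+\frac{a_{n-1}}{a_n}z^{n-1}+z^n\in \mathbb{C}[z].
\end{align*}
Its coefficients are $b_j:=a_j/a_n$ for $0\leq j\leq n-1$. Because multiplication by the nonzero scalar $1/a_n$ does not change the zero set, any $\lambda$ with $p(\lambda)=0$ also satisfies $q(\lambda)=0$.

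Next I will apply Theorem~\ref{CB} to $q$ at the root $\lambda$, which gives
\begin{align*}
	|\lambda|<1+\max_{0\leq j\leq n-1}|b_j|=1+\max_{0\leq j\leq n-1}\left|\frac{a_j}{a_n}\right|.
\end{align*}
The remaining equality uses only multiplicativity of the modulus, $|a_j/a_n|=|a_j|/|a_n|$. The positive constant $1/|a_n|$ then factors out of the maximum.

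There is no genuine obstacle. The only points to check are that $a_n\neq0$ makes the division legitimate and preserves the roots, and that $q$ has exactly the monic form required by Theorem~\ref{CB}.
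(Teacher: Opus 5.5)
Your proof is correct and is essentially the paper's route. The paper states this as a cited corollary of Theorem~\ref{CB} without writing out a proof, and dividing $p$ by $a_n$ to obtain a monic polynomial with coefficients $a_j/a_n$ is exactly the intended reduction; the paper's noncommutative analogue, Corollary~\ref{GNCB}, likewise states its bound in terms of $a_n^{-1}a_j$.
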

By a suitable modification, Corollary \ref{CBC} gives the following result. 
\begin{theorem}\label{CLB}
\cite{HIRSTMACEY, RAHMANSCHMEISSER, MARDEN}
(\textbf{Cauchy Lower Bound}) Let 
\begin{align*}
	p(z)=a_0+a_1z+\cdots+a_{n-1}z^{n-1}+a_nz^n\in \mathbb{C}[z], \quad a_0\neq 0, a_n \neq 0.
\end{align*}
If $\lambda \in \mathbb{C}$ satisfies $p(\lambda)=0$, then 
\begin{align*}
|\lambda|&>\frac{1}{1+\max_{1\leq j \leq n}\left|\frac{a_j}{a_0}\right|}=\frac{1}{1+\frac{1}{|a_0|}\max_{1\leq j \leq n}|a_j|}\\
&=\frac{|a_0|}{|a_0|+\max_{1\leq j \leq n}\left|a_j\right|}.
\end{align*}		
\end{theorem}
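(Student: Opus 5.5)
The plan is to apply Corollary \ref{CBC} to the reversed polynomial
\begin{align*}
	q(z)=z^n p(1/z)=a_n+a_{n-1}z+\cdots+a_1z^{n-1}+a_0z^n ,
\end{align*}
which is legitimate because its leading coefficient $a_0$ is nonzero.

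First, since $a_0=p(0)\neq 0$, any root $\lambda$ of $p$ satisfies $\lambda\neq 0$. Then
\begin{align*}
	q(1/\lambda)=\lambda^{-n}p(\lambda)=0,
\end{align*}
so $\mu=1/\lambda$ is a root of $q$.

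Next, I would read off the coefficients of $q$: the coefficient of $z^k$ is $a_{n-k}$, and the leading coefficient is $a_0$. Hence the set $\{a_{n-k}: 0\leq k\leq n-1\}$ of non-leading coefficients is exactly $\{a_j: 1\leq j\leq n\}$. Corollary \ref{CBC} applied to $q$ therefore gives
\begin{align*}
	\frac{1}{|\lambda|}=|\mu|<1+\max_{1\leq j\leq n}\left|\frac{a_j}{a_0}\right|.
\end{align*}
Both sides are positive, so taking reciprocals reverses the strict inequality and yields the first expression in the statement.

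The remaining equalities are routine algebra: pull $1/|a_0|$ out of the maximum, then multiply numerator and denominator by $|a_0|$.

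There is no real obstacle here. The only points needing care are:
\begin{enumerate}[(i)]
	\item checking that $a_0\neq 0$ is exactly what lets $q$ satisfy the hypothesis of Corollary \ref{CBC} and guarantees $\lambda\neq 0$;
	\item keeping the index bookkeeping of the reversed coefficients correct;
	\item noting that the hypothesis $a_n\neq 0$ is not strictly needed for the argument, since $q$ only needs a nonzero leading coefficient $a_0$, although it ensures $q$ is a genuine polynomial whose constant term is $a_n$.
\end{enumerate}
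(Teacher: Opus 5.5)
Your proof is correct and takes the same route as the paper. The paper obtains this bound "by a suitable modification" of Corollary \ref{CBC}, namely this reversal; its noncommutative analogue makes the step explicit by passing to $v=u^{-1}$ in $a_n+a_{n-1}v+\cdots+a_0v^n$ and applying Corollary \ref{GNCB}, and you handle $\lambda\neq 0$, the coefficient reindexing and the reciprocal step correctly.
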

In 1931, Montel derived another bound  for every root of a complex polynomial using the sum of the absolute values of the coefficients \cite{MONTEL}.
\begin{theorem} \cite{HIRSTMACEY, RAHMANSCHMEISSER, MARDEN}  \label{MB}   (\textbf{Montel Upper Bound}) Let 
	\begin{align*}
		p(z)=a_0+a_1z+\cdots+a_{n-1}z^{n-1}+z^n\in \mathbb{C}[z].
	\end{align*}
	If $\lambda \in \mathbb{C}$ satisfies $p(\lambda)=0$, then 
	\begin{align*}
		|\lambda|\leq \max\left\{1, \sum_{j=0}^{n-1}|a_j|\right\}.
	\end{align*}
\end{theorem}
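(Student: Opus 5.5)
The plan is a direct case split on whether $|\lambda|\leq 1$ or $|\lambda|>1$. Set $S=\sum_{j=0}^{n-1}|a_j|$ and $M=\max\{1,S\}$. If $|\lambda|\leq 1$, then $|\lambda|\leq M$ trivially and there is nothing to prove. So the only real case is $|\lambda|>1$, and there we will show $|\lambda|\leq S$.

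Assume $|\lambda|>1$. Since $p(\lambda)=0$, the equation rearranges to
\begin{align*}
\lambda^n=-\left(a_0+a_1\lambda+\cdots+a_{n-1}\lambda^{n-1}\right).
\end{align*}
Taking absolute values and using the triangle inequality gives
\begin{align*}
|\lambda|^n\leq \sum_{j=0}^{n-1}|a_j||\lambda|^j.
\end{align*}
Because $|\lambda|>1$, we have $|\lambda|^j\leq |\lambda|^{n-1}$ for every $0\leq j\leq n-1$. Hence
\begin{align*}
|\lambda|^n\leq |\lambda|^{n-1}\sum_{j=0}^{n-1}|a_j|=|\lambda|^{n-1}S.
\end{align*}
Dividing by $|\lambda|^{n-1}\neq 0$ yields $|\lambda|\leq S\leq M$.

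Combining the two cases gives $|\lambda|\leq \max\{1,S\}$ in all cases, which is the claim. There is no genuine obstacle here. The one point that needs care is the monotonicity $|\lambda|^j\leq|\lambda|^{n-1}$, which holds only when $|\lambda|\geq 1$; this is exactly why the case split is used and why the constant $1$ appears in the maximum.
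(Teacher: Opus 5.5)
Your proof is correct and is the same argument the paper uses: the paper only cites this classical statement, but its proof of Theorem \ref{NMB1} is exactly your case split on $|\lambda|\leq 1$ versus $|\lambda|>1$, followed by the triangle inequality, the bound $|\lambda|^j\leq|\lambda|^{n-1}$, and cancellation of $|\lambda|^{n-1}$. In the scalar case, the extra hypothesis $\|u^n\|=\|u\|^n$ that the paper's version needs holds automatically, since $|\lambda^n|=|\lambda|^n$.
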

\begin{corollary} \cite{HIRSTMACEY, RAHMANSCHMEISSER, MARDEN}
(\textbf{Montel Upper Bound}) Let 
\begin{align*}
	p(z)=a_0+a_1z+\cdots+a_{n-1}z^{n-1}+a_nz^n\in \mathbb{C}[z], \quad a_n \neq 0.
\end{align*}
If $\lambda \in \mathbb{C}$ satisfies $p(\lambda)=0$, then 
\begin{align*}
	|\lambda|\leq \max\left\{1, \sum_{j=0}^{n-1}\left|\frac{a_j}{a_n}\right|\right\}=\max\left\{1,\frac{1}{|a_n|} \sum_{j=0}^{n-1}|a_j|\right\}.
\end{align*}	
\end{corollary}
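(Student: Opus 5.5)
The plan is to reduce the statement to Theorem \ref{MB} by normalizing the leading coefficient. Since $a_n\neq 0$, we may divide $p$ by $a_n$ and form
\begin{align*}
	q(z)=\frac{1}{a_n}p(z)=\frac{a_0}{a_n}+\frac{a_1}{a_n}z+\cdots+\frac{a_{n-1}}{a_n}z^{n-1}+z^n\in \mathbb{C}[z].
\end{align*}
This polynomial is monic and has exactly the same zeros as $p$. In particular, if $p(\lambda)=0$ then $q(\lambda)=p(\lambda)/a_n=0$.

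Next, apply Theorem \ref{MB} to $q$, using the coefficients $b_j:=a_j/a_n$ for $0\leq j\leq n-1$. This immediately gives
\begin{align*}
	|\lambda|\leq \max\left\{1,\sum_{j=0}^{n-1}|b_j|\right\}=\max\left\{1,\sum_{j=0}^{n-1}\left|\frac{a_j}{a_n}\right|\right\}.
\end{align*}
The stated equality with $\max\{1,\frac{1}{|a_n|}\sum_{j=0}^{n-1}|a_j|\}$ then follows from multiplicativity of the modulus, $|a_j/a_n|=|a_j|/|a_n|$, after pulling the constant $1/|a_n|$ out of the finite sum.

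There is no real obstacle here; the only point requiring care is that $a_n\neq 0$ is used both to define $q$ and to guarantee that $q$ and $p$ share their roots. If one prefers not to rely on Theorem \ref{MB} as a black box, one can reprove it directly. Suppose $|\lambda|>1$ is a root of the monic polynomial. From $\lambda^n=-\sum_{j<n} b_j\lambda^j$ we get $|\lambda|^n\leq \sum_j|b_j||\lambda|^j\leq |\lambda|^{n-1}\sum_j|b_j|$, hence $|\lambda|\leq\sum_j|b_j|$. Combined with the trivial case $|\lambda|\leq 1$, this gives the bound.
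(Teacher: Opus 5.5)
Your proposal is correct and follows the paper's route: the paper states this result as a corollary of Theorem \ref{MB}, obtained by dividing $p$ by $a_n$. Your optional direct argument, which bounds $|\lambda|^j\leq|\lambda|^{n-1}$ for $|\lambda|>1$ and then cancels, is the same estimate the paper uses in the proof of Theorem \ref{NMB1}, specialized to $\mathcal{A}=\mathbb{C}$.
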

\begin{theorem} \label{MLB}\cite{HIRSTMACEY, RAHMANSCHMEISSER, MARDEN}
	(\textbf{Montel Lower Bound}) Let 
	\begin{align*}
		p(z)=a_0+a_1z+\cdots+a_{n-1}z^{n-1}+a_nz^n\in \mathbb{C}[z], \quad a_0\neq 0, a_n \neq 0.
	\end{align*}
	If $\lambda \in \mathbb{C}$ satisfies $p(\lambda)=0$, then 
	\begin{align*}
			|\lambda|&\geq \frac{1}{\max\left\{1, \sum_{j=1}^{n}\left|\frac{a_j}{a_0}\right|\right\}}=\frac{1}{\max\left\{1,\frac{1}{|a_0|} \sum_{j=1}^{n}|a_j|\right\}}\\
			&=\frac{|a_0|}{\max\left\{|a_0|, \sum_{j=1}^{n}\left|a_j\right|\right\}}.
	\end{align*}
\end{theorem}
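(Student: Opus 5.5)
The plan is to reduce Theorem \ref{MLB} to the Montel upper bound (the corollary following Theorem \ref{MB}) by passing to the reversed polynomial. The same device turns Corollary \ref{CBC} into Theorem \ref{CLB}.

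First I note that $\lambda\neq 0$, since $p(0)=a_0\neq 0$. I then define the reciprocal polynomial
\begin{align*}
q(z)=z^np(1/z)=a_n+a_{n-1}z+\cdots+a_1z^{n-1}+a_0z^n .
\end{align*}
This polynomial has leading coefficient $a_0\neq 0$. Dividing $p(\lambda)=0$ by $\lambda^n$ gives $q(1/\lambda)=0$, so $\mu:=1/\lambda$ is a root of $q$.

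Next I apply the Montel upper bound corollary to $q$. Its coefficients of $z^0,\dots,z^{n-1}$ are $a_n,a_{n-1},\dots,a_1$, and its leading coefficient is $a_0$. The corollary therefore gives
\begin{align*}
\frac{1}{|\lambda|}=|\mu|\leq \max\left\{1,\sum_{j=1}^{n}\left|\frac{a_j}{a_0}\right|\right\}.
\end{align*}
The right side is positive, so inverting yields the first expression in the statement.

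The remaining equalities are algebra. The second follows because $|a_j/a_0|=|a_j|/|a_0|$. For the third, I multiply the numerator and denominator by $|a_0|>0$ and use $|a_0|\max\{1,t\}=\max\{|a_0|,|a_0|t\}$.

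No step is a genuine obstacle. The only points needing care are that $\lambda\neq 0$ and that the coefficients are correctly re-indexed under reversal, so that the sum runs over $j=1,\dots,n$ and the normalization is by $a_0$.
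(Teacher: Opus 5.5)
Your argument is correct and follows the paper's route. The paper cites this classical bound without proof, but it obtains every lower bound exactly as you do: Theorem \ref{CLB} from Corollary \ref{CBC}, and the noncommutative lower bounds via $v=u^{-1}$, in each case passing to the reversed polynomial $a_n+a_{n-1}z+\cdots+a_0z^n$ with root $1/\lambda$ and applying the matching upper bound, and your re-indexing and final algebraic identities are all in order.
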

We naturally ask for versions of Theorems \ref{CB},  \ref{CLB}, \ref{MB} and \ref{MLB} for polynomials over Banach algebras (particularly, over C*-algebras).  In this article, we derive them.

\section{Noncommutative Cauchy and Noncommutative Montel Bounds}
Let $\mathcal{A}$ be a unital Banach algebra and $\mathcal{A}[z]$ be the set of all polynomials over $\mathcal{A}$. 
We derive a non-commutative version of Theorem \ref{CB} as follows.

\begin{theorem} \label{NCB}
(\textbf{Noncommutative Cauchy Upper Bound}) Let $\mathcal{A}$ be a unital Banach algebra. Let 
\begin{align*}
	p(z)=a_0+a_1z+\cdots+a_{n-1}z^{n-1}+z^n\in \mathcal{A}[z].
\end{align*}
Let $u \in \mathcal{A}$ be such that $p(u)=0$ and 
\begin{align}\label{E1}
	\|u^n\|=\|u\|^n.
\end{align}
 Then 
\begin{align*}
		\|u\|< 1+\max_{0\leq j \leq n-1}\|a_j\|.
\end{align*}
\end{theorem}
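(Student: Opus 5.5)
Mimic the classical proof of Theorem \ref{CB}, with the hypothesis \eqref{E1} playing the role of multiplicativity of the modulus. Write $M=\max_{0\leq j\leq n-1}\|a_j\|$. Since $p(u)=0$, we have $u^n=-(a_0+a_1u+\cdots+a_{n-1}u^{n-1})$. (Here the coefficients sit on the left; the argument is the same for either convention of evaluation.) Taking norms and using \eqref{E1} gives
\begin{align*}
\|u\|^n=\|u^n\|\leq \sum_{j=0}^{n-1}\|a_j\|\|u^j\|\leq \sum_{j=0}^{n-1}\|a_j\|\|u\|^j\leq M\sum_{j=0}^{n-1}\|u\|^j,
\end{align*}
using submultiplicativity $\|u^j\|\leq\|u\|^j$ and $\|u^0\|=\|1\|$. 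We may normalise $\|1\|=1$, as is standard for unital Banach algebras; otherwise there is a harmless factor to track at $j=0$.

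Next, split into cases. If $\|u\|\leq 1$, the claim is immediate because $1\leq 1+M$. For the strict inequality in the subcase $M=0$ with $\|u\|=1$, note that $M=0$ forces $u^n=0$, so $\|u\|^n=0$ by \eqref{E1} and hence $u=0$. So the strict inequality holds in all of these cases.

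Now assume $\|u\|=r>1$. Summing the geometric series gives $r^n\leq M\frac{r^n-1}{r-1}<M\frac{r^n}{r-1}$, where the last inequality is strict provided $M>0$. If $M=0$, then $u=0$ as above, contradicting $r>1$. Dividing by $r^n$ yields $r-1<M$, that is $\|u\|<1+M$.

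The only genuinely noncommutative point is the first display. The lower bound $\|u^n\|$ must be converted to $\|u\|^n$, and this is exactly where hypothesis \eqref{E1} is used; the upper bounds only need submultiplicativity. I expect this to be the main step to get right. The rest is the scalar Cauchy argument verbatim, with care taken over the degenerate cases to keep the inequality strict.
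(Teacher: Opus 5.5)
Your proof is correct and follows the paper's argument: use $\|u^n\|=\|u\|^n$ to turn the left side into $\|u\|^n$, bound the right side by $M$ times a geometric sum via the triangle inequality and submultiplicativity, and conclude $\|u\|-1<M$ (the paper phrases this as a contradiction after assuming $\|u\|\geq 1+M$). Your separate handling of $M=0$, where $u^n=0$ forces $u=0$, is a genuine small improvement, since the paper's steps $1+M>1$ and $M(1-\|u\|^{-n})<M$ silently require $M>0$; your worry about $\|1\|$ is unnecessary, because the $j=0$ term is simply $a_0$ with norm $\|a_0\|$.
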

\begin{proof}
	Define $M\coloneqq \max_{0\leq j \leq n-1}\|a_j\|.$ Let $u \in \mathcal{A}$ satisfies $p(u)=0$ and $\|u^n\|=\|u\|^n$. We claim that $	\|u\|< 1+M$. Let us suppose that the claim fails. Then 
	\begin{align*}
		   	\|u\|\geq 1+M>1.
	\end{align*}
Since $p(u)=0$, we have 
\begin{align*}
	u^n=-(a_0+a_1u+\cdots+a_{n-1}u^{n-1}).
\end{align*}
By taking the norm and using Equation (\ref{E1}), we get 
\begin{align*}
\|u\|^n&=\|u^n\|=\|-(a_0+a_1u+\cdots+a_{n-1}u^{n-1})\|\\
&=\|a_0+a_1u+\cdots+a_{n-1}u^{n-1}\|\\
&\leq \|a_0\|+\|a_1u\|+\cdots+\|a_{n-1}u^{n-1}\|\\
&\leq \|a_0\|+\|a_1\|\|u\|+\cdots+\|a_{n-1}\|\|u^{n-1}\|\\
&\leq \|a_0\|+\|a_1\|\|u\|+\cdots+\|a_{n-1}\|\|u\|^{n-1}\\
&\leq M(1+\|u\|+\cdots+\|u\|^{n-1})\|\\
&=M\frac{\|u\|^n-1}{\|u\|-1}.
\end{align*}
Hence 
\begin{align*}
\|u\|^n\leq M\frac{\|u\|^n-1}{\|u\|-1}. 
\end{align*}
Rearranging, 
\begin{align*}
	\|u\|-1\leq M\frac{\|u\|^n-1}{\|u\|^n}=M \left(1-\frac{1}{\|u\|^n}\right)<M.
\end{align*}
Therefore 
\begin{align*}
	1+M\leq \|u\|<1+M
\end{align*}
which is a contraction. Hence claim holds which completes the proof. 
\end{proof}
\begin{corollary}\label{GNCB}
(\textbf{Noncommutative Cauchy Upper Bound}) Let $\mathcal{A}$ be a unital Banach algebra. Let 
\begin{align*}
	p(z)=a_0+a_1z+\cdots+a_{n-1}z^{n-1}+a_nz^n\in \mathcal{A}[z], \quad a_n \text{ is invertible}.
\end{align*}
Let $u \in \mathcal{A}$ be such that $p(u)=0$ and 
\begin{align*}
	\|u^n\|=\|u\|^n.
\end{align*}
Then 
\begin{align*}
	\|u\|< 1+\max_{0\leq j \leq n-1}\|a_n^{-1}a_j\|\leq 1+\|a^{-1}_n\|\max_{0\leq j \leq n-1}\|a_j\|.
\end{align*}	
\end{corollary}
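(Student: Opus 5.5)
We reduce to Theorem \ref{NCB} by left-multiplying the polynomial by $a_n^{-1}$. Define
\begin{align*}
	q(z)\coloneqq a_n^{-1}p(z)=a_n^{-1}a_0+a_n^{-1}a_1z+\cdots+a_n^{-1}a_{n-1}z^{n-1}+z^n\in \mathcal{A}[z].
\end{align*}
This is a monic polynomial over $\mathcal{A}$, since $a_n^{-1}a_n=1$.

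The point to check, because $\mathcal{A}$ is noncommutative, is that evaluation at $u$ commutes with this left multiplication. By the convention implicit in the paper, coefficients sit to the left of the powers of $z$, so $p(u)=a_0+a_1u+\cdots+a_nu^n$. Left multiplication by $a_n^{-1}$ distributes over the sum and associates with each term: $a_n^{-1}(a_ju^j)=(a_n^{-1}a_j)u^j$. Hence
\begin{align*}
	q(u)=a_n^{-1}p(u)=0.
\end{align*}
Right multiplication by $a_n^{-1}$ would not work here, because it would land to the right of $u^j$. That is why the corollary's bound involves $a_n^{-1}a_j$ rather than $a_ja_n^{-1}$.

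The hypothesis $\|u^n\|=\|u\|^n$ concerns only $u$ and is unchanged. Theorem \ref{NCB}, applied to $q$ with coefficients $a_n^{-1}a_j$ for $0\leq j\leq n-1$, therefore gives
\begin{align*}
	\|u\|<1+\max_{0\leq j\leq n-1}\|a_n^{-1}a_j\|.
\end{align*}

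The second inequality follows from submultiplicativity of the Banach algebra norm. For each $j$ we have $\|a_n^{-1}a_j\|\leq\|a_n^{-1}\|\|a_j\|$, so taking the maximum over $0\leq j\leq n-1$ yields
\begin{align*}
	\max_{0\leq j\leq n-1}\|a_n^{-1}a_j\|\leq \|a_n^{-1}\|\max_{0\leq j\leq n-1}\|a_j\|.
\end{align*}

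The only real subtlety in the whole argument is the order of multiplication. Everything else is a direct invocation of Theorem \ref{NCB}.
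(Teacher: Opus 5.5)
Your argument is correct and is essentially the paper's approach. The paper states this corollary without a separate proof, as an immediate consequence of Theorem \ref{NCB}: left-multiply $p$ by $a_n^{-1}$ to get a monic polynomial with coefficients $a_n^{-1}a_j$ that still vanishes at $u$, apply the theorem, and get the second inequality from submultiplicativity (your check that $a_n^{-1}$ must act on the left matches the paper's separate right-coefficient version with $a_ja_n^{-1}$).
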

Note that we used condition (\ref{E1}) in the proof of Theorem \ref{NCB}. We are unable to derive the result without this assumption. We now derive noncommutative version of Theorem \ref{CLB}.
\begin{theorem}
(\textbf{Noncommutative Cauchy Lower Bound}) Let $\mathcal{A}$ be a unital Banach algebra. Let 
\begin{align*}
	p(z)=a_0+a_1z+\cdots+a_{n-1}z^{n-1}+a_nz^n\in \mathcal{A}[z], \quad a_0\text{ and } a_n \text{ are invertible}.
\end{align*}
Let $u \in \mathcal{A}$ be such that $p(u)=0$, $u$ is invertible  and 
\begin{align*}
	\|(u^{-1})^n\|=\|u^{-1}\|^n.
\end{align*}
Then 
\begin{align*}
	\frac{1}{\|u^{-1}\|}>\frac{1}{1+\max_{1\leq j \leq n}\|a_0^{-1}a_j\|}\geq \frac{1}{1+\|a^{-1}_0\|\max_{1\leq j \leq n}\|a_j\|}.
\end{align*}	
\end{theorem}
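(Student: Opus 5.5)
The plan is to reduce the statement to Theorem \ref{NCB} applied to the ``reversed'' polynomial evaluated at $v\coloneqq u^{-1}$, in the same way that Theorem \ref{CLB} follows from Corollary \ref{CBC} in the classical case. The only new point is to handle noncommutativity. Evaluation in $\mathcal{A}[z]$ puts the coefficients on the left and the powers of the variable on the right. So every manipulation must multiply powers of $u$ on the right and multiply coefficients on the left.

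\textbf{Step 1: pass to $v=u^{-1}$.} Start from
\begin{align*}
0=p(u)=a_0+a_1u+\cdots+a_nu^n .
\end{align*}
Multiply on the right by $u^{-n}=v^n$. All powers of $u$ and $u^{-1}$ commute with one another, so $u^ju^{-n}=v^{n-j}$. This gives
\begin{align*}
a_0v^n+a_1v^{n-1}+\cdots+a_{n-1}v+a_n=0.
\end{align*}

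\textbf{Step 2: make it monic.} Multiply the last identity on the left by $a_0^{-1}$. This gives $q(v)=0$, where
\begin{align*}
q(z)\coloneqq a_0^{-1}a_n+a_0^{-1}a_{n-1}z+\cdots+a_0^{-1}a_1z^{n-1}+z^n\in\mathcal{A}[z].
\end{align*}
Thus $q$ is monic with coefficients $b_k=a_0^{-1}a_{n-k}$ for $0\le k\le n-1$. Hence $\{b_0,\dots,b_{n-1}\}=\{a_0^{-1}a_j:1\le j\le n\}$.

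\textbf{Step 3: apply Theorem \ref{NCB} and invert.} The hypothesis $\|(u^{-1})^n\|=\|u^{-1}\|^n$ is exactly condition (\ref{E1}) for $v$. Theorem \ref{NCB} applied to $q$ and $v$ then gives
\begin{align*}
\|u^{-1}\|=\|v\|<1+\max_{1\le j\le n}\|a_0^{-1}a_j\|.
\end{align*}
Since $u$ is invertible, $\|u^{-1}\|>0$, so taking reciprocals gives the first inequality. The second inequality follows from $\|a_0^{-1}a_j\|\le\|a_0^{-1}\|\|a_j\|$ together with the fact that $t\mapsto 1/(1+t)$ is decreasing on $[0,\infty)$.

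The only step needing care is Step 1. One must check that right multiplication by $u^{-n}$ is legitimate termwise, since it is applied to products $a_ju^j$ whose factors do not commute. This works because $u^j u^{-n}=u^{j-n}$ involves only powers of the single element $u$. Left multiplication by $a_0^{-1}$ then keeps the coefficients on the left, in the form required by Theorem \ref{NCB}. The invertibility of $a_n$ is not actually used in this argument.
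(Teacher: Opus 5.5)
Your proof is correct and follows the paper's route. The paper also right-multiplies $p(u)=0$ by $u^{-n}$ to get $a_n+a_{n-1}v+\cdots+a_0v^n=0$ for $v=u^{-1}$, then applies Corollary \ref{GNCB} with leading coefficient $a_0$; your left-multiplication by $a_0^{-1}$ followed by Theorem \ref{NCB} is just that corollary unpacked, and your remark that invertibility of $a_n$ is unused holds equally for the paper's argument.
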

\begin{proof}
	Since $u$ invertible, we have 
	\begin{align*}
		0=a_n+a_{n-1}u^{-1}+\cdots+a_1u^{-(n-1)}+a_0u^{-n}.
	\end{align*}
Define $v\coloneqq u^{-1}$. Then 
\begin{align*}
	0=a_n+a_{n-1}v+\cdots+a_1v^{n-1}+a_0v^n.
\end{align*}
Result follows by applying Corollary \ref{GNCB} to $v$.
\end{proof}
\begin{corollary}
	Let $\mathcal{A}$ be a unital C*-algebra. Let 
	\begin{align*}
		p(z)=a_0+a_1z+\cdots+a_{n-1}z^{n-1}+a_nz^n\in \mathcal{A}[z], \quad a_n \text{ is invertible}.
	\end{align*}
	Let $u \in \mathcal{A}$ be such that $p(u)=0$ and $u$ is normal or self-adjoint.
	Then 
	\begin{align*}
		\|u\|< 1+\max_{0\leq j \leq n-1}\|a_n^{-1}a_j\|.
	\end{align*}
\end{corollary}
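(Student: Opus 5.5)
The plan is to reduce the statement to Corollary \ref{GNCB}. The only hypothesis of that corollary not assumed here is the norm condition $\|u^n\|=\|u\|^n$, so the proof consists of checking that this condition holds automatically for a normal element of a unital C*-algebra. Self-adjoint elements are normal, so it suffices to treat the normal case.

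The key step is the identity $\|u^k\|=\|u\|^k$ for every $k\geq 1$ whenever $u$ is normal. I would derive it from the C*-identity in two stages.

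First, take $h$ self-adjoint. Then $\|h^2\|=\|h^*h\|=\|h\|^2$, and by induction $\|h^{2^m}\|=\|h\|^{2^m}$.

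Next, take $u$ normal. Since $u^*$ commutes with $u$, we have $(u^*u)^{k}=(u^k)^*u^k$. Using this together with the self-adjoint case applied to $h=u^*u$, for $k=2^m$:
\begin{align*}
\|u^{2^m}\|^2=\|(u^{2^m})^*u^{2^m}\|=\|(u^*u)^{2^m}\|=\|u^*u\|^{2^m}=\|u\|^{2^{m+1}}.
\end{align*}
This gives $\|u^{2^m}\|=\|u\|^{2^m}$.

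To pass from powers of $2$ to an arbitrary $n$, I would use submultiplicativity. Choose $2^m\geq n$. Then
\begin{align*}
\|u\|^{2^m}=\|u^{2^m}\|\leq \|u^n\|\,\|u\|^{2^m-n}.
\end{align*}
If $u\neq 0$, this yields $\|u^n\|\geq \|u\|^n$. The reverse inequality $\|u^n\|\leq\|u\|^n$ is trivial, and the case $u=0$ is immediate. Alternatively, one can cite the standard fact that $\|u\|$ equals the spectral radius for normal $u$, together with the spectral mapping theorem.

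With $\|u^n\|=\|u\|^n$ established, Corollary \ref{GNCB} applies directly, since $\mathcal{A}$ is a unital Banach algebra, $a_n$ is invertible and $p(u)=0$. This gives $\|u\|<1+\max_{0\leq j\leq n-1}\|a_n^{-1}a_j\|$.

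The only real obstacle is the passage from powers of two to arbitrary $n$, and the submultiplicativity argument above handles it.
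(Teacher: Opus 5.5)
Your proof is correct and follows the paper's route: you reduce to Corollary \ref{GNCB} by showing $\|u^n\|=\|u\|^n$ for normal $u$, using $\|u^{2^k}\|=\|u\|^{2^k}$ together with submultiplicativity $\|u^{2^k}\|\leq\|u^n\|\,\|u\|^{2^k-n}$. The differences are cosmetic: you derive the power-of-two identity from the C*-identity where the paper cites it, and you argue directly where the paper argues by contradiction.
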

\begin{proof}
	Since $u$ is a normal element in a C*-algebra, it is known that  $\|u^{2^k}\|=\|u\|^{2^k}$ for all  $k \in \mathbb{N}$ \cite{TAKESAKI}. We claim that $	\|u^n\|=\|u\|^n$ for all  $k \in \mathbb{N}.$ Let us suppose that this fails for a nonzero normal element $u$. Let $n \in \mathbb{N}$ be such that $	\|u^n\|<\|u\|^n$. Choose $m$ such that $n+m=2^k$ for some $k \in \mathbb{N}$. Then 
	\begin{align*}
		\|u^{2^k}\|=\|u^{n+m}\|=\|u^nu^m\|\leq \|u^n\|\|u^m\|<\|u\|^n\|u\|^m=\|u\|^{n+m}=\|u\|^{2^k},
	\end{align*}
which is a contradiction. 
\end{proof}
Following the proof of Theorem \ref{NCB} we easily get following results. 
\begin{theorem}
(\textbf{Noncommutative Cauchy Upper Bound}) Let $\mathcal{A}$ be a unital Banach algebra. Let 
\begin{align*}
	p(z)=a_0+za_1+\cdots+z^{n-1}a_{n-1}+z^na_n\in \mathcal{A}[z], \quad a_n \text{ is invertible}.
\end{align*}
Let $u \in \mathcal{A}$ be such that $p(u)=0$ and 
\begin{align*}
	\|u^n\|=\|u\|^n.
\end{align*}
Then 
\begin{align*}
	\|u\|< 1+\max_{0\leq j \leq n-1}\|a_ja_n^{-1}\|\leq 1+\|a^{-1}_n\|\max_{0\leq j \leq n-1}\|a_j\|.
\end{align*}		
\end{theorem}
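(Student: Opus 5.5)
The plan is to reduce to a monic polynomial with coefficients on the right and then repeat the contradiction argument from the proof of Theorem \ref{NCB}. Since $a_n$ is invertible, I multiply the identity $p(u)=0$ on the right by $a_n^{-1}$, which gives
\begin{align*}
	u^n=-(b_0+ub_1+\cdots+u^{n-1}b_{n-1}), \qquad b_j\coloneqq a_ja_n^{-1}.
\end{align*}
Multiplying on the right is the correct side here, because the powers of $u$ sit to the left of the coefficients and are therefore left untouched. Set $M\coloneqq \max_{0\leq j\leq n-1}\|b_j\|$.

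Next I argue by contradiction. Suppose $\|u\|\geq 1+M$, so in particular $\|u\|>1$. Taking norms and using the hypothesis $\|u^n\|=\|u\|^n$, I estimate each term by submultiplicativity: $\|u^jb_j\|\leq \|u^j\|\|b_j\|\leq \|u\|^jM$. Only the upper bound $\|u^j\|\leq\|u\|^j$ is needed for $j<n$, and the hypothesised equality is used only for $j=n$. This yields
\begin{align*}
	\|u\|^n\leq M\,\frac{\|u\|^n-1}{\|u\|-1},
\end{align*}
and rearranging gives $\|u\|-1\leq M(1-\|u\|^{-n})<M$ when $M>0$. This contradicts $\|u\|\geq 1+M$. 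When $M=0$, the displayed inequality forces $\|u\|^n\leq 0$, which contradicts $\|u\|>1$. Either way, $\|u\|<1+\max_j\|a_ja_n^{-1}\|$.

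Finally, the second inequality follows directly from $\|a_ja_n^{-1}\|\leq\|a_j\|\|a_n^{-1}\|$, taking the maximum over $j$. I expect no real obstacle. The only point needing care is choosing the correct side for multiplication by $a_n^{-1}$, since left multiplication would not commute past the powers $u^j$. Every other step is identical to the proof of Theorem \ref{NCB}, with the order of factors in each product reversed.
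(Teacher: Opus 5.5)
Your proposal is correct and matches what the paper intends: the paper gives no separate proof, only the remark that the result follows by repeating the proof of Theorem \ref{NCB}, and that is what you do after right-multiplying $p(u)=0$ by $a_n^{-1}$. One small correction: when $M=0$, the assumption $\|u\|\geq 1+M$ gives only $\|u\|\geq 1$, not $\|u\|>1$, so the contradiction should come from the unsummed bound $\|u\|^n\leq M\sum_{j=0}^{n-1}\|u\|^j=0$; your separate handling of this case is still more careful than the paper's, which simply asserts $1+M>1$.
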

\begin{theorem}
(\textbf{Noncommutative Cauchy Lower Bound}) Let $\mathcal{A}$ be a unital Banach algebra. Let 
\begin{align*}
	p(z)=a_0+za_1+\cdots+z^{n-1}a_{n-1}+z^na_n\in \mathcal{A}[z], \quad a_0 \text{ and } a_n \text{ are invertible}.
\end{align*}
Let $u \in \mathcal{A}$ be such that $p(u)=0$, $u$ is invertible and 
\begin{align*}
	\|(u^{-1})^n\|=\|u^{-1}\|^n.
\end{align*}
Then 
\begin{align*}
	\frac{1}{\|u^{-1}\|}>\frac{1}{	1+\max_{1\leq j \leq n}\|a_ja_0^{-1}\|}\geq \frac{1}{1+\|a^{-1}_0\|\max_{1\leq j \leq n}\|a_j\|}.
\end{align*}			
\end{theorem}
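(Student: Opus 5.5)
We mirror the proof of the left-coefficient lower bound: pass to $v=u^{-1}$, rewrite $p(u)=0$ as a polynomial identity in $v$ with coefficients on the right, and apply the right-coefficient Noncommutative Cauchy Upper Bound stated just above.

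\textbf{Step 1 (reverse the polynomial).} Since $u$ is invertible, multiply the identity $a_0+ua_1+\cdots+u^{n-1}a_{n-1}+u^na_n=0$ on the \emph{left} by $u^{-n}$. Each term $u^{j}a_j$ becomes $u^{-(n-j)}a_j$, so with $v\coloneqq u^{-1}$ we obtain
\begin{align*}
0=a_n+v a_{n-1}+\cdots+v^{n-1}a_1+v^n a_0 .
\end{align*}
Multiplying on the left is what keeps the powers of $v$ to the left of the coefficients. This is exactly the form $q(z)=b_0+zb_1+\cdots+z^nb_n$ with $b_j=a_{n-j}$, so $q(v)=0$. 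The leading coefficient $b_n=a_0$ is invertible by hypothesis, and $\|v^n\|=\|v\|^n$ is precisely the assumed $\|(u^{-1})^n\|=\|u^{-1}\|^n$.

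\textbf{Step 2 (apply the right-coefficient upper bound).} The previous theorem applies to $q$ and $v$ and gives
\begin{align*}
\|u^{-1}\|<1+\max_{0\leq j\leq n-1}\|b_jb_n^{-1}\|=1+\max_{1\leq k\leq n}\|a_ka_0^{-1}\|,
\end{align*}
where we reindex with $k=n-j$. Since $\|a_ka_0^{-1}\|\leq \|a_k\|\|a_0^{-1}\|$, the maximum is at most $\|a_0^{-1}\|\max_{1\leq k\leq n}\|a_k\|$.

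\textbf{Step 3 (take reciprocals).} All quantities involved are positive; in particular $\|u^{-1}\|>0$ because $u^{-1}\neq 0$. Taking reciprocals reverses both inequalities and yields the stated chain.

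\textbf{Main obstacle.} The only delicate point is Step 1: the multiplication by $u^{-n}$ must be on the left. This produces a polynomial with right coefficients, so it is the right-coefficient version of the upper bound that must be invoked, and its conclusion involves $\|a_ka_0^{-1}\|$, matching the statement.

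If instead one wants to re-run the argument of Theorem \ref{NCB} directly, use the form $v^n=-(a_n+va_{n-1}+\cdots+v^{n-1}a_1)a_0^{-1}$. Apply the same norm estimate with $M=\max_k\|a_ka_0^{-1}\|$ and the same geometric-sum contradiction.
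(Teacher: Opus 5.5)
Your proposal is correct and takes essentially the paper's route. The paper proves the left-coefficient lower bound by setting $v=u^{-1}$, rewriting $p(u)=0$ as a reversed polynomial in $v$ with leading coefficient $a_0$, and applying the upper bound; it says this right-coefficient version follows the same way, and your observation that $u^{-n}$ must multiply on the left (giving $a_n+va_{n-1}+\cdots+v^na_0=0$, hence the terms $\|a_ka_0^{-1}\|$ from the right-coefficient upper bound) is exactly the needed adjustment, with the reindexing and reciprocal steps also correct.
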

\begin{corollary}
	Let $\mathcal{A}$ be a unital C*-algebra. Let 
	\begin{align*}
		p(z)=a_0+a_1z+\cdots+z^{n-1}a_{n-1}+z^na_n\in \mathcal{A}[z], \quad a_n \text{ is invertible}.
	\end{align*}
	Let $u \in \mathcal{A}$ be such that $p(u)=0$ and $u$ is normal or self-adjoint.
	Then 
	\begin{align*}
		\|u\|< 1+\max_{0\leq j \leq n-1}\|a_ja_n^{-1}\|\leq 1+\|a_n^{-1}\|\max_{0\leq j \leq n-1}\|a_j\|.
	\end{align*}
\end{corollary}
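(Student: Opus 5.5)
The plan is to combine the two ingredients already in place: the power-norm identity for normal elements, taken from the proof of the earlier C*-algebra corollary, and the right-coefficient version of the Noncommutative Cauchy Upper Bound stated just above. Note that the polynomial here is written with the coefficients $a_1$ and $a_j$ ($j\ge 2$) mixed on the left and right. Since $a_1u$ and $ua_1$ have the same norm bound $\|a_1\|\|u\|$, the side on which a coefficient sits does not matter for the norm estimates. I therefore treat the polynomial as $p(u)=a_0+ua_1+\cdots+u^{n-1}a_{n-1}+u^na_n=0$ (the same argument handles the $a_1u$ term verbatim).

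\textbf{Step 1: the hypothesis $\|u^n\|=\|u\|^n$.} Since $u$ is normal (self-adjoint elements are normal), the C*-identity gives $\|u^{2^k}\|=\|u\|^{2^k}$ for all $k$. If $u=0$ there is nothing to prove. Otherwise, suppose $\|u^n\|<\|u\|^n$ and pick $m$ with $n+m=2^k$. Then
\begin{align*}
\|u\|^{2^k}=\|u^{2^k}\|\leq \|u^n\|\|u^m\|<\|u\|^n\|u\|^m=\|u\|^{2^k},
\end{align*}
a contradiction. Hence $\|u^n\|=\|u\|^n$.

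\textbf{Step 2: reduce to a monic equation.} Multiply $p(u)=0$ on the right by $a_n^{-1}$ to get
\begin{align*}
u^n=-\left(a_0a_n^{-1}+ua_1a_n^{-1}+\cdots+u^{n-1}a_{n-1}a_n^{-1}\right).
\end{align*}
This is exactly the situation of the right-coefficient Noncommutative Cauchy Upper Bound above.

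\textbf{Step 3: run the Cauchy argument.} Set $M=\max_j\|a_ja_n^{-1}\|$ and suppose $\|u\|\geq 1+M>1$. Taking norms and using $\|u^j\|\le\|u\|^j$ gives
\begin{align*}
\|u\|^n\le M\frac{\|u\|^n-1}{\|u\|-1},
\end{align*}
so $\|u\|-1<M$, a contradiction. This proves $\|u\|<1+\max_j\|a_ja_n^{-1}\|$.

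The second inequality follows from submultiplicativity, $\|a_ja_n^{-1}\|\le\|a_j\|\|a_n^{-1}\|$. The only delicate point is Step 1, and it is already settled by the doubling trick.
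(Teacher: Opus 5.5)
Your main argument is the route the paper intends. The paper gives no separate proof, only the remark that these results follow the proof of the Noncommutative Cauchy Upper Bound. You use the doubling trick from the earlier C*-corollary to get $\|u^n\|=\|u\|^n$ for normal $u$. Right multiplication by $a_n^{-1}$ and the geometric-series estimate then give $\|u\|<1+\max_j\|a_ja_n^{-1}\|$, and submultiplicativity gives the outer inequality. This is correct for $p(z)=a_0+za_1+\cdots+z^{n-1}a_{n-1}+z^na_n$. That is evidently what the paper means: the $a_1z$ is a typo for $za_1$, matching the preceding theorem.

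What is wrong is your parenthetical claim that the same argument handles an $a_1u$ term verbatim, and the justification that the side of a coefficient does not matter for the norm estimates. The side is irrelevant for bounds in terms of $\|a_1\|$, but the bound you are proving is in terms of $\|a_1a_n^{-1}\|$. After right multiplication the term becomes $a_1ua_n^{-1}$, and the estimate $\|a_1ua_n^{-1}\|\le\|a_1a_n^{-1}\|\,\|u\|$ needs $a_1$ and $a_n^{-1}$ to be adjacent. Otherwise you only get $\|a_1\|\,\|u\|\,\|a_n^{-1}\|$, which yields only the weaker bound $\|u\|<1+\|a_n^{-1}\|\max_j\|a_j\|$.

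The sharper inequality genuinely fails for the literal statement. In $M_2(\mathbb{C})$ with $n=3$, let $u=\mathrm{diag}(2,0)$, $a_0=a_2=0$, and
\[
a_1=\begin{pmatrix}-4&-1/2\\0&0\end{pmatrix},\qquad a_3=\begin{pmatrix}1&0\\8&1\end{pmatrix}.
\]
Then $a_0+a_1u+u^2a_2+u^3a_3=\mathrm{diag}(-8,0)+\mathrm{diag}(8,0)=0$, and $u$ is self-adjoint with $\|u\|=2$. However,
\[
a_1a_3^{-1}=\begin{pmatrix}0&-1/2\\0&0\end{pmatrix},
\]
so $1+\max_{0\le j\le 2}\|a_ja_3^{-1}\|=3/2<2$. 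You should drop the parenthetical and state explicitly that you are proving the corrected, right-coefficient statement.
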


We now derive noncommutative version of Theorem \ref{MB}.
\begin{theorem} \label{NMB1}
	(\textbf{Noncommutative Montel Upper Bound}) Let $\mathcal{A}$ be a unital Banach algebra. Let 
	\begin{align*}
		p(z)=a_0+a_1z+\cdots+a_{n-1}z^{n-1}+z^n\in \mathcal{A}[z].
	\end{align*}
	Let $u \in \mathcal{A}$ be such that $p(u)=0$ and 
	\begin{align}\label{E2}
		\|u^n\|=\|u\|^n.
	\end{align}
	Then 
	\begin{align*}
		\|u\|\leq \max\left\{1, \sum_{j=0}^{n-1}\|a_j\|\right\}.
	\end{align*}
\end{theorem}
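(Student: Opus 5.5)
The plan is to follow the pattern of the proof of Theorem \ref{NCB}, using the same norm estimate but closing it differently. Set $S\coloneqq \sum_{j=0}^{n-1}\|a_j\|$ and take $u$ with $p(u)=0$ and $\|u^n\|=\|u\|^n$. If $\|u\|\leq 1$ there is nothing to prove, since the right-hand side is at least $1$. So I will assume $\|u\|>1$ and aim to show $\|u\|\leq S$.

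From $p(u)=0$ we get $u^n=-(a_0+a_1u+\cdots+a_{n-1}u^{n-1})$. I then take norms and use Equation (\ref{E2}), the triangle inequality, submultiplicativity, and $\|u^j\|\leq \|u\|^j$:
\begin{align*}
\|u\|^n=\|u^n\|\leq \sum_{j=0}^{n-1}\|a_j\|\,\|u\|^{j}\leq \sum_{j=0}^{n-1}\|a_j\|\,\|u\|^{n-1}=S\|u\|^{n-1}.
\end{align*}
The middle inequality holds because $\|u\|>1$ gives $\|u\|^j\leq \|u\|^{n-1}$ for $0\leq j\leq n-1$. Since $\|u\|>1$ we have $\|u\|^{n-1}>0$, so dividing by it yields $\|u\|\leq S\leq \max\{1,S\}$. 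Combining this with the trivial case $\|u\|\leq 1$ proves the theorem.

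The only delicate point is the same as in Theorem \ref{NCB}. One must use hypothesis (\ref{E2}) to replace $\|u^n\|$ by $\|u\|^n$ on the left. Submultiplicativity only gives $\|u^n\|\leq\|u\|^n$, which points the wrong way, whereas on the right the inequalities $\|u^j\|\leq\|u\|^j$ are exactly the direction needed. Beyond this, the argument is routine. It also does not need the contradiction set-up used in Theorem \ref{NCB}, because the case split on whether $\|u\|\leq 1$ handles the maximum directly.
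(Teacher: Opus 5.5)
Your proposal is correct and matches the paper's proof: the case split on $\|u\|\leq 1$, the norm estimate using the hypothesis $\|u^n\|=\|u\|^n$ together with the triangle inequality and submultiplicativity, the bound $\|u\|^j\leq\|u\|^{n-1}$ when $\|u\|>1$, and cancellation of $\|u\|^{n-1}$.
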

\begin{proof}
	Let $u \in \mathcal{A}$ be such that $p(u)=0$ and $\|u^n\|=\|u\|^n$. If $\|u\|\leq 1$, then the conclusion holds. We therefore assume that $\|u\|>1$. 
	Since $p(u)=0$, we have 
\begin{align*}
	u^n=-(a_0+a_1u+\cdots+a_{n-1}u^{n-1}).
\end{align*}
By taking norm and using Equation (\ref{E2}), we get 
\begin{align*}
	\|u\|^n&=\|u^n\|=\|-(a_0+a_1u+\cdots+a_{n-1}u^{n-1})\|\\
	&=\|a_0+a_1u+\cdots+a_{n-1}u^{n-1}\|\\
	&\leq \|a_0\|+\|a_1u\|+\cdots+\|a_{n-1}u^{n-1}\|\\
	&\leq \|a_0\|+\|a_1\|\|u\|+\cdots+\|a_{n-1}\|\|u^{n-1}\|\\
	&\leq \|a_0\|+\|a_1\|\|u\|+\cdots+\|a_{n-1}\|\|u\|^{n-1}\\
		&\leq \|a_0\|\|u\|^{n-1}+\|a_1\|\|u\|^{n-1}+\cdots+\|a_{n-1}\|\|u\|^{n-1}\\
	&=\|u\|^{n-1}\sum_{j=0}^{n-1}\|a_j\|.
\end{align*}
Therefore 	
\begin{align*}
	\|u\|^n\leq \|u\|^{n-1}\sum_{j=0}^{n-1}\|a_j\|.
\end{align*}
Cancellation of $\|u\|^{n-1}$ gives the result.
\end{proof}
Note that we used Equation (\ref{E2}) in the proof of Theorem \ref{NMB1}. We are unable to derive without it. 
\begin{corollary}
Let $\mathcal{A}$ be a unital Banach algebra. Let 
\begin{align*}
	p(z)=a_0+a_1z+\cdots+a_{n-1}z^{n-1}+a_nz^n\in \mathcal{A}[z], \quad a_n \text{ is invertible}.
\end{align*}
Let $u \in \mathcal{A}$ be such that $p(u)=0$ and 
\begin{align*}
	\|u^n\|=\|u\|^n.
\end{align*}
Then 
\begin{align*}
	\|u\|< \max\left\{1, \sum_{j=0}^{n-1}\|a_n^{-1}a_j\|\right\}\leq \max\left\{1,\|a_n^{-1}\| \sum_{j=0}^{n-1}\|a_j\|\right\}.
\end{align*}	
\end{corollary}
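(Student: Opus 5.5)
The plan is to reduce to the monic case and apply Theorem \ref{NMB1}, in the same way Corollary \ref{GNCB} follows from Theorem \ref{NCB}. Set $b_j\coloneqq a_n^{-1}a_j$ for $0\leq j\leq n-1$ and consider the monic polynomial
\begin{align*}
	q(z)=b_0+b_1z+\cdots+b_{n-1}z^{n-1}+z^n\in \mathcal{A}[z].
\end{align*}
The key point is that in $p$ the coefficients sit to the \emph{left} of the powers of $z$. Left multiplication by the fixed element $a_n^{-1}$ therefore commutes with evaluation at $u$:
\begin{align*}
	q(u)=\sum_{j=0}^{n-1}a_n^{-1}a_ju^j+a_n^{-1}a_nu^n=a_n^{-1}p(u)=0.
\end{align*}
This is the only place where noncommutativity matters. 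It is harmless precisely because we multiply on the side where the coefficients are.

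Next, since $u$ still satisfies $\|u^n\|=\|u\|^n$, Theorem \ref{NMB1} applies directly to $q$ and $u$. It gives
\begin{align*}
	\|u\|\leq \max\left\{1,\sum_{j=0}^{n-1}\|a_n^{-1}a_j\|\right\}.
\end{align*}
The second inequality follows from submultiplicativity, $\|a_n^{-1}a_j\|\leq \|a_n^{-1}\|\|a_j\|$, summing over $j$, and monotonicity of $t\mapsto\max\{1,t\}$.

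The main obstacle is the strict inequality "$<$" in the first step of the statement. It cannot be obtained in general, and only "$\leq$" is true, as in Theorem \ref{NMB1}. For instance, take $\mathcal{A}=\mathbb{C}$, $p(z)=z-1$ and $u=1$. Then $\|u\|=1=\max\{1,1\}$, so strict inequality fails. I would therefore prove the statement with "$\leq$" in place of "$<$", and treat the strict sign as a typo inherited from the Cauchy-type Corollary \ref{GNCB}. In the Cauchy case the strictness genuinely comes from the factor $1-\|u\|^{-n}<1$, and Montel's argument has no analogue of that factor.
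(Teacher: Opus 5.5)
Your proof is correct and is the intended derivation. The paper states this corollary without proof, obtaining it from Theorem \ref{NMB1} just as Corollary \ref{GNCB} comes from Theorem \ref{NCB}: left-multiplying by $a_n^{-1}$ gives a monic polynomial with the same root $u$, and submultiplicativity gives the second inequality. You are also right that the strict sign is a typo, since Theorem \ref{NMB1} only yields $\leq$ and your example $\mathcal{A}=\mathbb{C}$, $p(z)=z-1$, $u=1$ shows that $<$ genuinely fails; the statement should read $\|u\|\leq\max\{1,\sum_{j=0}^{n-1}\|a_n^{-1}a_j\|\}$.
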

We now derive another noncommutative Montel upper bound with different assumption. 
\begin{theorem}\label{NMB}
(\textbf{Noncommutative Montel Upper Bound}) Let $\mathcal{A}$ be a unital Banach algebra. Let 
\begin{align*}
	p(z)=a_0+a_1z+\cdots+a_{n-1}z^{n-1}+z^n\in \mathcal{A}[z].
\end{align*}
Let $u \in \mathcal{A}$ be such that $p(u)=0$ and $u$ is invertible. Then
\begin{align*}
	\frac{1}{\|u^{-1}\|}\leq \max\left\{1, \sum_{j=0}^{n-1}\|a_j\|\right\}.
\end{align*}
\end{theorem}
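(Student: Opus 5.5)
We need to bound $1/\|u^{-1}\|$ with no norm-multiplicativity assumption. The key tool is the inequality $\|x\| \ge \|xy\|/\|y\|$, which holds in any Banach algebra. Invertibility of $u$ gives $1=\|1\|\le \|u^{-1}\|\,\|u\|$, so $\frac{1}{\|u^{-1}\|}\le \|u\|$. More usefully, $\|u^{-1}\|^{-k}\le \|u^k\|$-type estimates hold, because $\|1\| = \|u^{-k}u^k\|\le \|u^{-1}\|^k\|u^k\|$. (I will take $\|1\|=1$, as is standard for unital Banach algebras.)

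Set $r\coloneqq 1/\|u^{-1}\|$ and $S\coloneqq \sum_{j=0}^{n-1}\|a_j\|$. If $r\le 1$ we are done, so assume $r>1$. From $p(u)=0$ we have $1=u^n u^{-n}$, and $u^n=-(a_0+a_1u+\cdots+a_{n-1}u^{n-1})$. Multiply on the right by $u^{-n}$:
\begin{align*}
1=-(a_0u^{-n}+a_1u^{-(n-1)}+\cdots+a_{n-1}u^{-1}).
\end{align*}
Taking norms gives
\begin{align*}
1\le \sum_{j=0}^{n-1}\|a_j\|\,\|u^{-1}\|^{n-j}=\sum_{j=0}^{n-1}\|a_j\|\,r^{-(n-j)}.
\end{align*}
Since $r>1$, each factor satisfies $r^{-(n-j)}\le r^{-1}$ for $n-j\ge 1$. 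Hence $1\le S\,r^{-1}$, that is, $r\le S$. Combining the two cases yields $r\le \max\{1,S\}$.

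The only delicate point is choosing the right manipulation. Dividing by $u^{n}$ via right multiplication by $u^{-n}$ keeps each term in the form $a_j u^{-(n-j)}$, so that submultiplicativity applies only to powers of $u^{-1}$. These powers are bounded above by $\|u^{-1}\|^{n-j}$ with no equality assumption needed, and this is exactly why the hypothesis (\ref{E2}) can be dropped here. We also need $\|1\|=1$ (or at least $\|1\|\ge 1$, which always holds). I expect no real obstacle beyond checking that the inequality $r^{-(n-j)}\le r^{-1}$ is used in the correct direction, which requires $r>1$. This is precisely the case split above.
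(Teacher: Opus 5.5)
Your proof is correct and is essentially the paper's argument: after the case split on whether $1/\|u^{-1}\|\le 1$, the paper also right-multiplies the relation $p(u)=0$ by $u^{-n}$, takes norms using submultiplicativity, and uses $\|u^{-1}\|^{k}\le\|u^{-1}\|$ for $k\ge 1$ to obtain $1\le \|u^{-1}\|\sum_{j=0}^{n-1}\|a_j\|$. Your observation that only $\|1\|\ge 1$ is needed, which holds automatically, is a correct minor refinement that the paper leaves implicit.
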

\begin{proof}
Let $u \in \mathcal{A}$ satisfies $p(u)=0$ and  is invertible. If 
\begin{align*}
		\frac{1}{\|u^{-1}\|}\leq 1, 
\end{align*}
then the conclusion holds. Let us therefore assume that 
\begin{align*}
	\frac{1}{\|u^{-1}\|}> 1.
\end{align*}
Now we need to show that 
\begin{align*}
	\frac{1}{\|u^{-1}\|}\leq \sum_{j=0}^{n-1}\|a_j\|.
\end{align*}
Since $p(u)=0$ and $u$ is invertible, we have 
\begin{align*}
	1=-(a_{n-1}u^{-1}+\cdots+a_1u^{-(n-1)}+a_0u^{-n}).
\end{align*}
By taking the norm, we get 
\begin{align*}
	1&=\|-(a_{n-1}u^{-1}+\cdots+a_1u^{-(n-1)}+a_0u^{-n})\|\\
	&=\|a_{n-1}u^{-1}+\cdots+a_1u^{-(n-1)}+a_0u^{-n}\|\\
	&\leq \|a_{n-1}u^{-1}\|+\cdots+\|a_1u^{-(n-1)}\|+\|a_0u^{-n}\|\\
	&\leq \|a_{n-1}\|\|u^{-1}\|+\cdots+\|a_1\|\|u^{-(n-1)}\|+\|a_0\|\|u^{-n}\|\\
	&\leq \|a_{n-1}\|\|u^{-1}\|+\cdots+\|a_1\|\|u^{-1}\|^{n-1}+\|a_0\|\|u^{-1}\|^n\\
	&\leq  \|a_{n-1}\|\|u^{-1}\|+\cdots+\|a_1\|\|u^{-1}\|+\|a_0\|\|u^{-1}\|\\
	&=\|u^{-1}\|\left(\sum_{j=0}^{n-1}\|a_j\|\right).
\end{align*}
Therefore 
\begin{align*}
	\frac{1}{\|u^{-1}\|}\leq \sum_{j=0}^{n-1}\|a_j\|.
\end{align*}
\end{proof}
Note that we used the invertibility of $u$ in the proof of Theorem \ref{NMB}. We are unable to derive the result without this assumption. 
\begin{corollary}
(\textbf{Noncommutative Montel Upper Bound}) Let $\mathcal{A}$ be a unital Banach algebra. Let 
\begin{align*}
	p(z)=a_0+a_1z+\cdots+a_{n-1}z^{n-1}+a_nz^n\in \mathcal{A}[z], \quad a_n \text{ is invertible}.
\end{align*}
Let $u \in \mathcal{A}$ be such that $p(u)=0$ and $u$ is invertible. Then
\begin{align*}
	\frac{1}{\|u^{-1}\|}\leq \max\left\{1, \sum_{j=0}^{n-1}\|a_n^{-1}a_j\|\right\}\leq \max\left\{1,\|a_n^{-1} \|\left(\sum_{j=0}^{n-1}\|a_j\|\right)\right\}.
\end{align*}	
\end{corollary}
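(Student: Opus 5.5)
The plan is to reduce this statement to Theorem \ref{NMB} by passing to the monic polynomial obtained after left multiplication by $a_n^{-1}$. Since $a_n$ is invertible, define
\begin{align*}
	q(z)\coloneqq a_n^{-1}p(z)=a_n^{-1}a_0+a_n^{-1}a_1z+\cdots+a_n^{-1}a_{n-1}z^{n-1}+z^n\in \mathcal{A}[z].
\end{align*}
Since the coefficients are on the left of the powers of $z$, evaluation at $u$ commutes with left multiplication by a constant. Hence
\begin{align*}
	q(u)=a_n^{-1}\bigl(a_0+a_1u+\cdots+a_nu^n\bigr)=a_n^{-1}p(u)=0.
\end{align*}
Thus $u$ is an invertible root of the monic polynomial $q$ with coefficients $b_j\coloneqq a_n^{-1}a_j$ for $0\leq j\leq n-1$.

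Applying Theorem \ref{NMB} to $q$ and $u$ gives the first inequality immediately:
\begin{align*}
	\frac{1}{\|u^{-1}\|}\leq \max\left\{1,\sum_{j=0}^{n-1}\|a_n^{-1}a_j\|\right\}.
\end{align*}

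For the second inequality, use submultiplicativity of the norm, $\|a_n^{-1}a_j\|\leq \|a_n^{-1}\|\|a_j\|$. Summing over $j$ gives
\begin{align*}
	\sum_{j=0}^{n-1}\|a_n^{-1}a_j\|\leq \|a_n^{-1}\|\sum_{j=0}^{n-1}\|a_j\|.
\end{align*}
Because $t\mapsto \max\{1,t\}$ is monotone, this yields the stated bound.

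There is no real obstacle here. The only point needing care is that the coefficient normalization must be done by left multiplication: the coefficients sit to the left of $z^j$, so $a_n^{-1}$ passes through evaluation at $u$ without any commutativity assumption.
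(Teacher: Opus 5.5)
Your proof is correct and follows the route the paper intends. The paper states this corollary without proof as a direct consequence of Theorem \ref{NMB}, obtained as you do: left-multiply $p(u)=0$ by $a_n^{-1}$ to get a monic polynomial with coefficients $a_n^{-1}a_j$, then use $\|a_n^{-1}a_j\|\leq\|a_n^{-1}\|\|a_j\|$ and the monotonicity of $t\mapsto\max\{1,t\}$.
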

By doing a similar procedure as in the proof of noncommutative Cauchy lower bounds, we get following result. 
\begin{theorem}
(\textbf{Noncommutative Montel Lower Bound}) Let $\mathcal{A}$ be a unital Banach algebra. Let 
\begin{align*}
	p(z)=a_0+a_1z+\cdots+a_{n-1}z^{n-1}+a_nz^n\in \mathcal{A}[z], \quad a_0 \text{ and } a_n \text{ are invertible}.
\end{align*}
Let $u \in \mathcal{A}$ be such that $p(u)=0$ and $u$ is invertible. Then
\begin{align*}
	\|u\|\geq \frac{1}{\max\left\{1, \sum_{j=1}^{n}\|a_0^{-1}a_j\|\right\}}\geq  \frac{1}{\max\left\{1,\|a_0^{-1} \|\left(\sum_{j=1}^{n}\|a_j\|\right)\right\}}.
\end{align*}	
\end{theorem}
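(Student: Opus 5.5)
The plan is to mimic the proof of the noncommutative Cauchy lower bound: pass to $v\coloneqq u^{-1}$ and apply the Montel-type corollary that needs only invertibility of the root (the corollary following Theorem \ref{NMB}). We do not use Theorem \ref{NMB1}, since we have no norm condition on powers of $v$.

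\textbf{Step 1 (reverse the polynomial).} Since $u$ is invertible and $p(u)=0$, multiply $a_0+a_1u+\cdots+a_nu^n=0$ on the right by $u^{-n}$. This gives
\begin{align*}
0=a_n+a_{n-1}v+\cdots+a_1v^{n-1}+a_0v^n .
\end{align*}
This works because the coefficients sit on the left and powers of $u$ commute with each other. So $v$ is a root of the reversed polynomial $q(z)=a_n+a_{n-1}z+\cdots+a_0z^n$. Its leading coefficient $a_0$ is invertible, and $v$ is invertible with $v^{-1}=u$.

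\textbf{Step 2 (apply the earlier corollary).} Apply the Montel corollary after Theorem \ref{NMB} to $q$ and its invertible root $v$. The coefficient of $z^j$ in $q$ is $a_{n-j}$, so for $j=0,\dots,n-1$ the relevant norms are $\|a_0^{-1}a_{n-j}\|$. Reindexing gives
\begin{align*}
\frac{1}{\|v^{-1}\|}=\frac{1}{\|u\|}\leq \max\left\{1,\sum_{j=1}^{n}\|a_0^{-1}a_j\|\right\}\leq \max\left\{1,\|a_0^{-1}\|\sum_{j=1}^{n}\|a_j\|\right\}.
\end{align*}
Both maxima are at least $1>0$, and $\|u\|>0$ because $u$ is invertible. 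Taking reciprocals therefore reverses the inequalities and yields both claimed bounds on $\|u\|$.

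\textbf{Main obstacle.} Step 1 is routine. The real content is in the earlier corollary, which reduces to the monic case by left-multiplying by $a_n^{-1}$ (here $a_0^{-1}$). I would check two things there. First, that left multiplication preserves the root: $a_0^{-1}q(v)=0$, and $a_0^{-1}q$ is monic with coefficients $a_0^{-1}a_{n-j}$. Second, that Theorem \ref{NMB}'s proof uses only invertibility of the root and not any condition like \eqref{E2}.

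The one delicate point in that proof is the step replacing $\|u^{-1}\|^{k}$ by $\|u^{-1}\|$. This is valid because the case assumption $1/\|u^{-1}\|>1$ gives $\|u^{-1}\|<1$, so higher powers are smaller. With that confirmed, no additional hypothesis on powers of $u$ or $v$ is needed.
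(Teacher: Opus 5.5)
Your proposal is correct and matches the paper's argument, which the paper only sketches as ``a similar procedure'' to the noncommutative Cauchy lower bound: pass to $v=u^{-1}$, an invertible root of $a_n+a_{n-1}z+\cdots+a_0z^n$ with invertible leading coefficient $a_0$, apply the Montel corollary following Theorem \ref{NMB} to get $1/\|u\|\leq \max\{1,\sum_{j=1}^{n}\|a_0^{-1}a_j\|\}$, and take reciprocals. You were right to use that corollary rather than the one after Theorem \ref{NMB1}, since the latter would need the extra hypothesis $\|v^n\|=\|v\|^n$.
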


 \section{Conclusions}
 \begin{enumerate}
 	\item In 1829, Cauchy derived an upper bound for the zeros of complex polynomials using the maximum of the absolute values of the coefficients \cite{CAUCHY}. 
 	\item In 1931, Montel derived  an upper bound for the zeros of complex polynomials using the sum of the absolute values of the coefficients \cite{MONTEL}.
 	\item In this article, we  derived noncommutative versions of Cauchy and Montel bounds. 
 \end{enumerate}

 \bibliographystyle{plain}
 \bibliography{reference.bib}

\end{document}